\documentclass[11pt]{article}
\usepackage{amsmath, amsfonts, amssymb, amsthm}
\textwidth 148mm \textheight 225mm \setlength{\oddsidemargin}{0.5cm}
\setlength{\topmargin}{0cm} \setlength{\footskip}{1.8cm}
\newtheorem{Theorem}{Theorem}[section]

\newtheorem{Lemma}[Theorem]{Lemma}

\newtheorem{Open Problem}[Theorem]{Open Problem}

\newcommand{\mysection}[1]{\section{#1}\setcounter{equation}{0}}

\begin{document}

\title{ Existence of positive ground state solutions for the coupled Choquard system with potential}

\author{Jianqing Chen$^{1,2}$\ and Qian Zhang$^{1,}$\thanks{Corresponding 
author:\ qzhang\_fjnu@163.com} \\
\small  1.\  School of Mathematics and Statistics,  Fujian Normal University, \\
\small  Qishan Campus, Fuzhou 350117, P. R. China\\
\small  2.\  FJKLMAA and Center for Applied Mathematics of Fujian Province(FJNU), \\
\small  Fuzhou, 350117, P. R. China   }

\date{}

\maketitle
\noindent {\bf Abstract}: In this paper, we study the following coupled Choquard system in $\mathbb R^N$:
$$\left\{\aligned&-\Delta u+A(x)u=\frac{2p}{p+q} \bigl(I_\alpha\ast |v|^q\bigr)|u|^{p-2}u,\\
&-\Delta v+B(x)v=\frac{2q}{p+q}\bigl(I_\alpha\ast|u|^p\bigr)|v|^{q-2}v,\\
&\ u(x)\to0\ \ \hbox{and}\ \ v(x)\to0\ \ \hbox{as}\ |x|\to\infty,\endaligned\right.$$
where $\alpha\in(0,N)$ and $\frac{N+\alpha}{N}<p,\ q<2_*^\alpha$, in which  $2_*^\alpha$ denotes $\frac{N+\alpha}{N-2}$ if $N\geq 3$ and  $2_*^\alpha := \infty$ if $N=1,\ 2$.  The function $I_\alpha$ is a Riesz potential. By using Nehari manifold method, we obtain the existence of positive ground state solution in the case of bounded potential and periodic potential respectively. In particular, the nonlinear term includes the well-studied case $p=q$ and $u(x)=v(x)$, and the less-studied case $p\neq q$ and $u(x)\neq v(x)$. Moreover it seems to be the first existence result  for the  case of $p\neq q$.
\medskip

\noindent {\bf Keywords:} Coupled Choquard system; Ground state solution; Bounded potential;  Periodic potential

\medskip

\noindent {\bf Mathematics Subject Classification}:  35J20

\mysection {Introduction}
This paper is dedicated to the following Choquard system
\begin{equation}\label{eq1.1}
\left\{\aligned &-\Delta u + A(x)u  =\frac{2p}{p+q} \bigl(I_\alpha\ast |v|^q\bigr) |u|^{p-2}u,\\
&-\Delta v+B(x)v= \frac{2q}{p+q}\bigl(I_\alpha \ast |u|^p\bigr) |v|^{q-2}v,\\
&\ u(x)\to 0\ \ \hbox{and}\ \  v(x)\to 0\ \ \hbox{as}\ |x|\to \infty,\endaligned\right.
\end{equation}
where $u:=u(x),v:=v(x)$ are real valued functions on $\mathbb R^{N}(N\geq1)$. $I_{\alpha}:\mathbb R^{N}\backslash\{0\}\rightarrow\mathbb R$ is the Riesz potential defined by
$$I_{\alpha}(x)=\frac{A_{\alpha}}{|x|^{N-\alpha}}\ \ \hbox{with}\ \ A_{\alpha}=\frac{\Gamma(\frac{N-\alpha}{2})}{2^{\alpha}\pi^{\frac{N}{2}\Gamma(\frac{\alpha}{2})}},\ \ \alpha\in(0,N),$$
where $\Gamma$ denotes the classical Gamma function and $*$ the convolution on the Euclidean space $\mathbb R^{N}$. When $p=q$ and $u(x)=v(x)$, the system (\ref{eq1.1}) is related to the following single Choquard equation:
\begin{equation}\label{eq1.2}
-\Delta u+A(x)u=(I_{\alpha}\ast|u|^p)|u|^{p-2}u \ \ \hbox{in}\ \mathbb R^N.
\end{equation}
When $A(x)\equiv1$, Choquard equation (\ref{eq1.2}) is firstly appeared  in a work by Pekar describing the quantum mechanics of a polaron at rest \cite{pekar}. In the case of $N=3,\alpha=2$ and $p=2$, Choquard describe an electron trapped in its own hole, in a certain approximation to Hartree-Fock theory of one component plasma \cite{lieb}. The existence and uniqueness of the ground state solution was first considered by Lieb in 1976 \cite{lieb}. Later Lions \cite{li,lio} obtained the existence and multiplicity of solutions to (\ref{eq1.2}). In 1996, Penrose proposed a model of self-gravitating matter, in a programme in which quantum state reduction is understood as a gravitational phenomenon \cite{moroz}. Since the Choquard equation (\ref{eq1.2}) has been widely investigated by employing variational methods. For more interesting existence and qualitative properties of solutions to \eqref{eq1.2}, we refer the reader to \cite{morozs,ms,mos} and references therein.

In recent years, there has been increasing attention to equations like \eqref{eq1.2} on the existence of positive solutions and ground state solutions. When $A(x)\equiv1$, by using the concentration-compactness and Br\'{e}zis-Lieb lemmas, Moroz and Schaftingen \cite{mor} showed the existence of ground state solutions to \eqref{eq1.2} for $\frac{N+\alpha}{N}<p<\frac{N+\alpha}{N-2}$. Here and after, the  $2_*^\alpha$ denotes $\frac{N+\alpha}{N-2}$ if $N\geq 3$ and  $2_*^\alpha := \infty$ if $N=1,\ 2$.  We would also like to mention the papers \cite{ac,ma,mors} for related topics. In \cite{88}, Fr\"{o}hlich and  Lenzmann studied the existence of a positive radially symmetric ground state solution for \eqref{eq1.2} with a  confining potential:
\begin{equation}\label{eq1.03}
\lim_{|x|\rightarrow\infty}A(x)=+\infty.
\end{equation}
Such kind of hypotheses was introduced to guarantee the compactness of embedding  of
$E:=\left\{u\in H^{1}(\mathbb R^{N})\  | \ \int_{\mathbb R^{N}}A(x)u^{2}<\infty\right\}\hookrightarrow L^{s}(\mathbb R^{N})$
for $ 2\leq s< \frac{2N}{N-2}$.
The specific case of  $A(x)=|x|^2$ and $\frac{N+\alpha}{N}<p< 2_*^\alpha$, Feng \cite{80} has been studied the existence of a ground state solution. Under the assumption \eqref{eq1.03}, Schaftingen and Xia \cite{schx} obtained the sign-changing solution for the equation \eqref{eq1.2} in the case of $p\in[2, 2_*^\alpha)$. Clapp and Salazar \cite{c} gave the existence of positive solution for the nonlinear Choquard equation
$$-\Delta u+A(x)u=(|x|^{-\alpha}\ast|u|^p)|u|^{p-2}u,\  \ u \in H^1_0 (\Omega),$$
where $\Omega$ is an exterior domain of $\mathbb R^N$ with some symmetries, $p\in[2,\frac{2N-\alpha}{N-2})$, $A(x)$ is continuous, radially symmetric and
$$\inf_{x\in\mathbb R^ N}A(x) >0,\ \ A(x)\rightarrow A_\infty > 0 \ \ \hbox{as}\ \ |x|\rightarrow+\infty.$$

When $A(x)\equiv1$, a perturbed version of (\ref{eq1.2}) with the following form
\begin{equation}\label{eq1.3}
-\Delta u + u = (I_\alpha\ast|u|^p )|u|^{p-2} u + |u|^{ q-2} u,\ \  x\in\mathbb  R^N
\end{equation}
has been studied in \cite{chenguo}, where the  existence of solutions is obtained for $N=3$, $0<\alpha<1$, $p=2$ and $4\leq q<6$. Vaira \cite{vai1} has obtained a positive ground state solution when $N=3,\ \alpha=2,\ p=2,\ q\in(2,6)$, and Vaira \cite{vai2} further studied the nondegeneracy of the ground state solution in a special case $q=3$. Via the mountain pass theorem and Poho\v{z}aev identity, Li, Ma and Zhang in \cite{lm} studied  \eqref{eq1.3} for $\frac{N+\alpha}{N}<p<2_*^\alpha,\ 2<q<\frac{2N}{N-2}$, and proved the existence of ground state solution of mountain pass type. For more results of \eqref{eq1.3}, we refer to \cite{00,01,02} and the references therein. Recently, many researchers are interested in the elliptic system of Choquard type. When $\frac{N+\alpha}{N}<p=q<2_*^\alpha$, Chen and Liu \cite{chenpeng} have obtained the existence of positive ground state solutions to the following linearly coupled system:
\begin{equation}\label{eq1.4}
\left\{\aligned&-\Delta u+u=(I_\alpha\ast|u|^p)|u|^{p-2}u+\lambda v,\ \ x\in\mathbb R^N,\\
&-\Delta v+v=(I_\alpha\ast|v|^q)|v|^{q-2}v+\lambda u,\ \ x\in\mathbb R^N.\endaligned\right.
\end{equation}
Yang et al. \cite{yang} have proven the existence of positive ground state solutions when $p=q$ reaches the critical exponent. A slightly general version of (\ref{eq1.4}) was studied by Xu et al. \cite{xu}, where the authors have proven that the following system:
$$\left\{\aligned&-\Delta u+\lambda_1u=(I_\alpha\ast F(u))F'(u)+\beta v,\\
&-\Delta v+\lambda_2v=(I_\alpha\ast F(v))F'(v)+\beta u,\endaligned\right.$$
admits a nontrivial vector solution with the help of some conditions on $F'$ and Schwarz symmetrization method. Albuquerque, Silvab and Sousa \cite{al} considered the existence of positive solutions for the fractional coupled Choquard-type system. In \cite{al,chenpeng,xu,yang}, all results are positive solutions or ground state solutions of the linear coupled type Choquard system. Sun and Chang \cite{sun} obtained ground state solution of Choquard type with general nonlinearity.

But we do not see any results to (\ref{eq1.1}) in the case of $p\neq q$ and $A(x)\neq1$. In our recent work \cite{chenzhang}, we consider the ground state solutions of Choquard system with strongly indefinite structure. In this paper, we continue to study the ground state solutions of the Choquard system, and give the existence of the ground state solutions under the bounded potential and periodic potential.

The Choquard system \eqref{eq1.1} is variational in nature. A starting point is the following symmetric property
$$\aligned\int_{\mathbb R^N}\bigl(I_\alpha\ast|v|^q\bigr)|u|^p&=\int_{\mathbb R^N}\int_{\mathbb R^N}\frac{A_\alpha|v(x)|^q|u(y)|^p}{|x-y|^{N-\alpha}}dxdy\\
&=\int_{\mathbb R^N}\bigl(I_\alpha\ast|u|^p\bigr)|v|^q,\ \ u,v\in H^1(\mathbb R^N).\endaligned$$
Therefore the action functional $\mathcal{I}$ associated to the Choquard system (\ref{eq1.1}) is defined for each function $(u,v)$ in the space $H:=H^{1}(\mathbb R^N)\times H^{1}(\mathbb R^N)$ by
$$\aligned\mathcal{I}(u,v)&=\frac{1}{2}\int_{\mathbb R^N}(|\nabla u|^2+A(x)|u|^2+|\nabla v|^2+B(x)|v|^2)\\
&\ \ \ \ -\frac{2}{p+q}\int_{\mathbb R^N}(I_\alpha\ast|u|^p)|v|^q.\endaligned$$
In view of the Hardy-Littlewood-Sobolev inequality  \cite[theorem 4.3]{lil}, which states that if $s\in(1,\frac{N}{\alpha})$ then for every $v \in L^s(\mathbb R^N)$, $I_\alpha\ast v\in L^\frac{Ns}{N- \alpha s}(\mathbb R^N)$ and
\begin{equation}\label{eqHLS}
\int_{\mathbb R^N}|I_\alpha \ast v|^\frac{N s}{N-\alpha s}\le C\Bigl(\int_{\mathbb R^N}|v|^s \Bigr)^\frac{N}{N-\alpha s},
\end{equation}
and of the classical Sobolev embedding, the action functional $\mathcal{I}$ is well-defined and continuously differentiable whenever
$\frac{N+\alpha}{N}<p,\ q <2_*^\alpha.$ A natural constraint for the equation is the Nehari constraint
$\mathcal{P}(u,v):=\langle\mathcal{I}'(u,v),(u,v)\rangle=0,$
which leads to search for solutions to \eqref{eq1.1} by minimizing the action functional on the Nehari manifold
$$\mathcal{N}_0=\bigl\{(u,v)\in H\backslash\{(0,0)\}\ |\ \mathcal{P}(u,v)=0\bigr\}.$$
A nontrivial solution $(u,v)\in H $ of \eqref{eq1.1} is called a ground state solution if
\begin{equation}\label{eqc}
\aligned\mathcal{I}(u,v)&=c_0:=\inf_{(u,v)\in\mathcal{N}_0}\mathcal{I}(u,v).\endaligned
\end{equation}

Note that if $(u,0)$ is a solution to \eqref{eq1.1}, then $u=0$; and if  $(0,v)$ is a solution to \eqref{eq1.1}, then $v=0$. We have that
$$c_0=\inf \{\mathcal{I}(u,v)\ | \ u\neq0,\ v\neq 0, \ \mathcal{P}(u,v) =0\}. $$

Before state our main results, we make the following assumptions

$(F_{0})$ $A\in C(\mathbb R^{N},\mathbb R^{+}),$ $A(x)\geq A_{0}>0;$

$(F_{1})$ $ A(x)\leq A_{\infty}=\lim\limits_{|x|\rightarrow\infty}A(x)<+\infty$;

$(F_{2})$ $A(x)$ is $\tau_{i}$-periodic in $x_{i}$, $\tau_{i}>0$, $1\leq i\leq N$;

$(F_{3})$ $B\in C(\mathbb R^{N},\mathbb R^{+}),$ $B(x)$ is $\tau_{i}$-periodic in $x_{i}$, $\tau_{i}>0$, $1\leq i\leq N,$ $B(x)\geq B >0$.

Our  results are that this level $c_0$ is achieved.
\begin{Theorem}\label{th1.1}
Assume that $(F_{0})$, $(F_{1})$ and $B(x)\equiv B>0 .$ If $\frac{N+\alpha}{N}<p,\ q< 2_*^\alpha$ and $p\neq q$, then there exists a positive ground state solution $(u,v)\in H$ to the system \eqref{eq1.1} such that $\mathcal{I}(u,v)=c_0$.
\end{Theorem}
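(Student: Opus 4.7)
The plan is to apply the Nehari manifold variational method and recover compactness through an energy comparison with the limiting autonomous problem at infinity. First I would verify that $\mathcal{N}_0$ is a natural constraint: since $p+q > \tfrac{2(N+\alpha)}{N} > 2$, for each $(u,v)\in H$ with $u\not\equiv 0$ and $v\not\equiv 0$ the fibering map $t\mapsto \mathcal{I}(tu,tv)$ admits a unique positive maximum $t(u,v)$, giving a projection onto $\mathcal{N}_0$. On $\mathcal{N}_0$ one has the identity
\begin{equation*}
\mathcal{I}(u,v) = \Bigl(\tfrac12 - \tfrac{1}{p+q}\Bigr)\int_{\mathbb{R}^N}\bigl(|\nabla u|^2 + A(x)|u|^2 + |\nabla v|^2 + B|v|^2\bigr),
\end{equation*}
which shows $c_0>0$ and forces any minimizing sequence $(u_n,v_n)\subset \mathcal{N}_0$ to be bounded in $H$.

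Next I would introduce the autonomous system obtained by replacing $A(x)$ with the constant $A_\infty$, denoting by $\mathcal{I}_\infty$, $\mathcal{N}_\infty$ and $c_\infty$ the associated action, Nehari set and ground-state level, and prove the strict inequality $c_0 < c_\infty$. Given a ground state $(u_\infty,v_\infty)\in\mathcal{N}_\infty$ of the limit system (whose existence rests on a translation-invariant Lions-type non-vanishing argument, or equivalently on the periodic-potential case applied to constants), the explicit formula for the projection onto $\mathcal{N}_0$ produces a unique $t_0\in(0,1]$, and since $A(x)\le A_\infty$ with strict inequality on a set of positive measure where $u_\infty$ does not vanish, a direct computation based on the above Nehari identity gives
\begin{equation*}
c_0 \;\le\; \mathcal{I}(t_0 u_\infty, t_0 v_\infty) \;<\; \mathcal{I}_\infty(u_\infty,v_\infty) \;=\; c_\infty .
\end{equation*}

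With this preparation I would pass to a limit in a minimizing sequence $(u_n,v_n)$ for $c_0$. Up to a subsequence $(u_n,v_n)\rightharpoonup(u,v)$ weakly in $H$. I rule out $(u,v)=(0,0)$ by a concentration-compactness dichotomy: since the Choquard integral $\int(I_\alpha\ast|u_n|^p)|v_n|^q$ must stay bounded below (else the identity above forces $c_0=0$), the Hardy--Littlewood--Sobolev inequality \eqref{eqHLS} combined with the classical Lions lemma yields translations $y_n\in\mathbb{R}^N$ such that $(u_n(\cdot+y_n),v_n(\cdot+y_n))\rightharpoonup (\tilde u,\tilde v)\ne(0,0)$. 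If $|y_n|\to\infty$, the local uniform convergence $A(\cdot+y_n)\to A_\infty$ and Fatou's lemma, together with a projection onto $\mathcal{N}_\infty$ by some factor $s\le 1$, produce a point on $\mathcal{N}_\infty$ with energy at most $c_0$, contradicting $c_0<c_\infty$. Hence $y_n$ is bounded and $(u,v)\ne(0,0)$. Standard Br\'ezis--Lieb type weak-lower-semicontinuity on $\mathcal{N}_0$ then yields $(u,v)\in\mathcal{N}_0$ with $\mathcal{I}(u,v)=c_0$, and the Nehari relation forces both $u\not\equiv 0$ and $v\not\equiv 0$.

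Finally I would pass to the positive ground state by replacing $(u,v)$ with $(|u|,|v|)$, which lies in $\mathcal{N}_0$ with the same energy because the right-hand sides depend only on $|u|^p$ and $|v|^q$, and then invoking the strong maximum principle on each equation separately. The main obstacle is the compactness recovery step: establishing the strict comparison $c_0<c_\infty$ and, above all, ensuring that after translating the minimizing sequence the non-vanishing of the coupled quantity $(I_\alpha\ast |u_n|^p)|v_n|^q$ translates into the non-vanishing of \emph{both} $u_n$ and $v_n$, which is delicate because a priori the concentration could occur in only one of the two components.
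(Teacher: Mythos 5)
Your overall strategy (Nehari manifold plus a limiting autonomous problem at infinity and the strict comparison $c_0<c_\infty$) is a legitimate classical route, but as written it has two genuine gaps, and they sit exactly at the heart of the proof. First, the strict inequality $c_0<c_\infty$ does not follow from the hypotheses: $(F_1)$ only requires $A(x)\leq A_\infty$, and the case $A\equiv A_\infty$ (indeed constant potentials, cf.\ Corollary 1.3) is admissible, in which case $c_0=c_\infty$ and your dichotomy argument for unbounded translations collapses. Your derivation of strictness presumes $A<A_\infty$ on a set of positive measure met by $u_\infty$, which is not assumed. Moreover the comparison requires a ground state $(u_\infty,v_\infty)$ of the autonomous system, whose existence you defer to ``a translation-invariant Lions-type non-vanishing argument''; but that autonomous existence proof contains precisely the same difficulty you postpone (non-vanishing for the coupled nonlocal term and membership of the limit in the Nehari set), so the argument is partly circular. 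Note that the paper never introduces $c_\infty$ at all: it localizes the minimizing sequence with cutoffs around the concentration points, proves the far piece $(\lambda_n,\mu_n)$ has norm $O(\varepsilon^{\tau})$ by comparing the fibering maxima of the two pieces, and in the unbounded-translation case uses only the pointwise bound $A\leq A_\infty$ together with $\lim_n\int A(x)u_n^2=A_\infty\int\omega^2\geq\int A(x)\omega^2$ to show that the translated weak limit already attains $c_0$ for the original functional; no strict energy comparison is needed, which is why constant $A$ causes no trouble there.

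Second, the step you yourself label ``the main obstacle'' is left unproved, and it is not a side issue: you need (i) that non-vanishing of $\int(I_\alpha\ast|u_n|^p)|v_n|^q$ yields translations along which the weak limit is nontrivial, and (ii) that the limit actually lies on $\mathcal{N}_0$ with energy $c_0$ (``standard Br\'ezis--Lieb weak lower semicontinuity'' alone does not place a weak limit on a Nehari manifold). The paper resolves (i) by the semigroup property $I_\alpha=I_{\alpha/2}\ast I_{\alpha/2}$, Cauchy--Schwarz and \eqref{eqHLS}, which reduce the lower bound on the coupled term to a Lions non-vanishing statement for $u_n$ alone in $L^{2Np/(N+\alpha)}$; and the two-component worry disappears at the end, because once the limit is a nontrivial critical point the identity $\|(u,v)\|^2=2\int(I_\alpha\ast|u|^p)|v|^q$ (equivalently, the structure of the system: $v=0$ forces $u=0$) gives $u\neq0$ and $v\neq0$ automatically. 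Point (ii) is obtained in the paper through the quantitative cutoff splitting and the estimate $\|(\lambda_n,\mu_n)\|\leq C\varepsilon^{(p+q-2)/(2(p+q))}$, which upgrades weak convergence of the translated sequence to strong $L^2$ (hence $L^{2Np/(N+\alpha)}$) convergence before passing to the limit in the fibering maps. Without supplying arguments for these two points (or adopting a splitting of the type used in the proof of Theorem \ref{th1.2}, via $\mathcal{P}(u,v)<0$ versus $\mathcal{P}(u,v)>0$), your proposal does not yet constitute a proof.
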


\begin{Theorem}\label{th1.2}
Assume that $(F_{0})$, $(F_{2})$ and $(F_{3})$. If $\frac{N+\alpha}{N}<p,\ q< 2_*^\alpha$ and $p\neq q$, then there exists a positive ground state solution $(u,v)\in H$ to the \eqref{eq1.1} such that $\mathcal{I}(u,v)=c_0$.
\end{Theorem}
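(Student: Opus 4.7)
The plan is to attack Theorem~\ref{th1.2} by minimizing $\mathcal I$ on the Nehari manifold $\mathcal N_0$, using the translation invariance that the periodicity of $A$ and $B$ provides in order to recover the compactness lost on all of $\mathbb R^N$. First I would analyze the fibering map $t\mapsto\mathcal I(tu,tv)$ for $(u,v)\in H$ with $u\neq0$ and $v\neq0$: since the nonlinear term is positively homogeneous of degree $p+q>2$ and the quadratic part is positive definite by $(F_0)$ and $(F_3)$, this map has a unique critical point $t(u,v)>0$ which is a strict global maximum, so each such ray meets $\mathcal N_0$ in exactly one point. Standard estimates via the HLS inequality \eqref{eqHLS} and Sobolev embedding give $\inf_{\mathcal N_0}\|(u,v)\|_H>0$ and $c_0>0$. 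A minimizing sequence $(u_n,v_n)\subset\mathcal N_0$ is then automatically bounded in $H$, because on $\mathcal N_0$ the symmetry property yields
\[
\mathcal I(u,v)=\frac{p+q-2}{2(p+q)}\int_{\mathbb R^N}\bigl(|\nabla u|^2+A(x)|u|^2+|\nabla v|^2+B(x)|v|^2\bigr).
\]

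Next I would apply Lions' concentration-compactness lemma to $|u_n|^2+|v_n|^2$. If vanishing occurred, then $u_n,v_n\to 0$ in $L^s(\mathbb R^N)$ for every $s\in(2,\tfrac{2N}{N-2})$, which together with HLS forces $\int(I_\alpha\ast|u_n|^p)|v_n|^q\to 0$; combined with $\mathcal P(u_n,v_n)=0$ this gives $\|(u_n,v_n)\|_H\to 0$, contradicting $c_0>0$. Hence non-vanishing holds: there exist $\delta>0$ and $y_n\in\mathbb R^N$ with $\int_{B_1(y_n)}(|u_n|^2+|v_n|^2)\geq\delta$. Using $(F_2)$ and $(F_3)$ I would round $y_n$ to the nearest point of the lattice $\prod_{i=1}^N\tau_i\mathbb Z$; setting $(\tilde u_n(x),\tilde v_n(x)):=(u_n(x+y_n),v_n(x+y_n))$, periodicity yields $\mathcal I(\tilde u_n,\tilde v_n)=\mathcal I(u_n,v_n)\to c_0$ and $\mathcal P(\tilde u_n,\tilde v_n)=0$, while the local mass bound passes to a nontrivial weak limit $(u,v)\in H$.

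It remains to show that this weak limit is a critical point of $\mathcal I$ attaining $c_0$. By Ekeland's variational principle on the $C^1$-manifold $\mathcal N_0$, I may assume in addition that $(\tilde u_n,\tilde v_n)$ is a Palais--Smale sequence for $\mathcal I$ at level $c_0$; passing to the weak limit then gives $\mathcal I'(u,v)=0$, so $(u,v)$ solves \eqref{eq1.1}. The observation preceding the theorem (that $u\equiv0$ forces $v\equiv0$ and vice versa) upgrades $(u,v)\neq(0,0)$ to $u\neq0$ and $v\neq0$, so $(u,v)\in\mathcal N_0$. Fatou together with the displayed Nehari representation of $\mathcal I$ yields $\mathcal I(u,v)\leq\liminf\mathcal I(\tilde u_n,\tilde v_n)=c_0$, and the reverse inequality is automatic. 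Positivity is then obtained by replacing $(u,v)$ with $(|u|,|v|)$, which preserves both membership in $\mathcal N_0$ and the value of $\mathcal I$, bootstrapping via HLS to deduce $u,v\in L^\infty(\mathbb R^N)$, and applying the strong maximum principle to each component.

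The step that will require the most care is the passage $\mathcal I'(\tilde u_n,\tilde v_n)\to 0\Rightarrow\mathcal I'(u,v)=0$, because the cross-convolution $\int(I_\alpha\ast|u_n|^p)|v_n|^q$ is nonlocal and, when $p\neq q$, asymmetric. I would handle this via a Brezis--Lieb-type splitting for the Riesz convolution in the spirit of \cite{mor}, applied to the sequences $|u_n|^p$ and $|v_n|^q$ separately, together with HLS to control the mixed remainder terms. The asymmetry $p\neq q$ is precisely what distinguishes this case from the symmetric systems treated in \cite{al,chenpeng,xu,yang} and constitutes the main technical novelty of the proof.
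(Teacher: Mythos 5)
Your proposal is correct in outline, but it reaches the conclusion by a genuinely different mechanism than the paper. You upgrade the minimizing sequence, via Ekeland's variational principle on the $C^1$ manifold $\mathcal{N}_0$ (using that $\mathcal{P}'\neq 0$ there, so constrained Palais--Smale sequences yield free ones), to a Palais--Smale sequence at level $c_0$; after lattice translations you pass to the limit in $\mathcal{I}'$, obtain a nontrivial critical point directly, note that neither component can vanish, and recover $\mathcal{I}(u,v)=c_0$ by Fatou on the Nehari representation of $\mathcal{I}$. The paper never makes the minimizing sequence a Palais--Smale sequence: it carries over the whole machinery of Theorem \ref{th1.1} (the cutoff splitting $(u_n,v_n)=(\theta_n,\xi_n)+(\lambda_n,\mu_n)$ and the fibering-map comparison showing the far-away piece is negligible), and then, in the periodic setting, proves that the weak limit itself lies on $\mathcal{N}_0$ by a dichotomy on the sign of $\mathcal{P}(u,v)$: the case $\mathcal{P}(u,v)<0$ is excluded by Fatou and rescaling with $\bar t\in(0,1)$ from Lemma \ref{le22}, and the case $\mathcal{P}(u,v)>0$ by the Br\'ezis--Lieb splittings \eqref{eqI}--\eqref{eqI1} applied to $(u_n-u,v_n-v)$; only afterwards does it invoke the Lagrange multiplier argument of Lemma \ref{le2.3}. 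What your route buys is a shorter and more standard argument for the periodic case that avoids both the cutoff estimates and the sign dichotomy, at the price of having to justify carefully the weak-to-weak continuity of the nonlocal derivative terms $\int_{\mathbb R^N}(I_\alpha\ast|v_n|^q)|u_n|^{p-2}u_n\varphi$ (which you correctly flag; it follows from Hardy--Littlewood--Sobolev, local compactness and the splitting lemma of the type proved in Lemma \ref{le2.2}, and the asymmetry $p\neq q$ causes no real obstruction since the two factors are handled separately). What the paper's route buys is that it works with the minimizing sequence alone and reuses verbatim the analysis already set up for the bounded-potential theorem. Two small points of bookkeeping in your write-up: apply Ekeland before (or compatibly with) the translation step, since it replaces the sequence by a nearby one, and state explicitly that $(|u|,|v|)$, being a minimizer on the natural constraint $\mathcal{N}_0$, is again a critical point before invoking the strong maximum principle; both are routine.
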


\noindent {\bf Corollary 1.3.} If $A(x)$ and $B(x)$ are positive constants, one can still obtain the same results as  Theorem \ref{th1.1} and Theorem  \ref{th1.2} for system (\ref{eq1.1}).

\vskip4pt
\noindent {\bf Remark 1.4.}  Compared with related results, there are some differences and difficulties in our proofs: (1) By using the energy of any sign changing solution of the problem is strictly less than twice the least energy to ensure the compactness of Palais-Smale sequence, the authors in \cite{10} proved the existence of ground state solution. Here, we use the Nehari manifold method to prove it in Section 2. Moreover, our results include the case of bounded potential and periodic potential. (2) Different from \cite{guiguo,13,wangguo}, in this paper, we have to overcome the difficulty caused by the nonlinear coupled Choquard term $\int_{\mathbb R^N}(I_\alpha\ast|u|^p)|v|^q$. (3) In \cite{chenpeng,xu,yang}, they all obtain the positive solutions or ground state solutions of the Choquard system with $p=q$ and $u(x)=v(x)$. However, the positive ground state solution of the Choquard system (\ref{eq1.1}) with $p\neq q$ and $u(x)\neq v(x)$.
\vskip4pt

The rest of the paper is organized as follows. In Section 2, we set the variational framework for system \eqref{eq1.1} and some preliminary results. Section 3 is devoted
to studying the existence of ground state solutions to system \eqref{eq1.1} under the bounded potential. The existence of ground state solutions to system \eqref{eq1.1} under the periodic potential is proved in Section 4.

\mysection {Preliminaries and the proof of Theorem \ref{th1.1}}

Throughout this paper, $\|u\|_{H^1}$ and $|u|_r$ denote the usual norm of $H^1(\mathbb R^N)$ and $L^r(\mathbb R^N)$ for $r>1$, respectively. Let
$$\|(u,v)\|^2:=\int_{\mathbb R^{N}}(|\nabla u|^{2}+|\nabla v|^{2} + A(x)|u|^{2}+B(x)| v|^{2}).$$
This norm is equivalent to the norm of
$$\|(u,v)\|_H^2:=\int_{\mathbb R^{N}}(|\nabla u|^{2}+|\nabla v|^{2} + |u|^{2}+ |v|^{2}).$$
For convenience, $C$ and $C_{i}(i=1,2,\ldots)$ denote (possibly different) positive constants, $\int_{\mathbb R^{N}}g$ denotes the integral $\int_{\mathbb R^{N}}g(z)dz$. The $\rightarrow$ denotes strong convergence. The $\rightharpoonup$ denotes weak convergence.

Let $t\in\mathbb R^{+}$ and $(u,v)\in H$, we have
$$\aligned\mathcal{I}(tu,tv)&=\frac{1}{2}t^{2}\int_{\mathbb R^N}(|\nabla u|^2+|\nabla v|^2+A(x)|u|^2+B(x)|v|^2)\\
&\ \ \ \  -\frac{2 }{p+q}t^{p+q}\int_{\mathbb R^N}(I_\alpha\ast|u|^p)|v|^q.\endaligned$$
Let $h_{uv}(t):=\mathcal{I}(tu,tv)$. Since $p+q>\frac{2(N+\alpha)}{N}>2,$ we see that $h(t)>0$ for $t>0$ small enough and $h(t)\rightarrow-\infty$ as $t\rightarrow+\infty,$ which implies that $h(t)$ attains its maximum.
\begin{Lemma}\label{le24}
Let $\theta_1,\ \theta_2>0$. For every $t\geq0,$ define $h_{uv}(t):=\theta_{1}t^{2}-\theta_{2}t^{p+q}$. Then $h_{uv}$ has a unique critical point which corresponds to its maximum.
\end{Lemma}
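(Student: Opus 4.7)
The plan is to reduce the statement to an elementary one-variable calculus argument, since $h_{uv}$ depends on $t$ only through the explicit polynomial-type expression $\theta_1 t^2 - \theta_2 t^{p+q}$. First I would differentiate directly and factor:
$$h_{uv}'(t) = 2\theta_1 t - (p+q)\theta_2 t^{p+q-1} = t\bigl(2\theta_1 - (p+q)\theta_2 t^{p+q-2}\bigr).$$
The key structural input, already recorded in the paragraph preceding the lemma, is that $p+q > \frac{2(N+\alpha)}{N} > 2$, so the exponent $p+q-2$ is strictly positive. This is exactly what makes the bracket behave monotonically and therefore pins down a unique positive critical point.

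Next I would set $g(t) := 2\theta_1 - (p+q)\theta_2 t^{p+q-2}$ and observe that $g$ is strictly decreasing on $(0,\infty)$, with $g(t) \to 2\theta_1 > 0$ as $t\to 0^+$ and $g(t) \to -\infty$ as $t\to\infty$. Hence $g$ has a unique positive zero, which can be written explicitly as
$$t^{*} = \left(\frac{2\theta_1}{(p+q)\theta_2}\right)^{\frac{1}{p+q-2}}.$$
On $(0,\infty)$ this is the only critical point of $h_{uv}$. From the sign of $g$ one reads off $h_{uv}'>0$ on $(0,t^{*})$ and $h_{uv}'<0$ on $(t^{*},\infty)$, so $h_{uv}$ is strictly increasing on $[0,t^{*}]$ and strictly decreasing on $[t^{*},\infty)$; therefore $t^{*}$ is the unique positive critical point and realizes the global maximum of $h_{uv}$ on $[0,\infty)$.

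I do not expect any substantive obstacle: the conclusion follows from single-variable calculus the moment the positivity of $p+q-2$ is invoked. The only point worth flagging is that $h_{uv}'(0)=0$ as well, so in the statement "unique critical point" is to be understood in $(0,\infty)$; this is the natural convention here because $h_{uv}(t)=\mathcal{I}(tu,tv)$ is only used for $t>0$ in the sequel, and the endpoint $t=0$ just records the trivial value $h_{uv}(0)=0$.
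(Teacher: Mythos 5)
Your proof is correct and follows essentially the same route as the paper: differentiate, factor, and exhibit the unique positive zero $\bigl(\frac{2\theta_1}{(p+q)\theta_2}\bigr)^{\frac{1}{p+q-2}}$ of $h_{uv}'$, using $p+q>2$. Your added sign analysis of $g$ and the remark that $t=0$ is excluded (uniqueness being understood on $(0,\infty)$) only make explicit what the paper leaves implicit.
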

\begin{proof}
Since $p+q>\frac{2(N+\alpha)}{N}>2,$ it is easy to check that $h_{uv}$ has a maximum. Through a simple calculation, we get
$$h_{uv}'(t)=2\theta_{1}t-\theta_{2}(p+q)t^{p+q-1}.$$
Since $h_{uv}'(t)\rightarrow-\infty$ as $t\rightarrow+\infty$ and is positive for $t>0$ small, we obtain that there is $t>0$ such that $h_{uv}'(t)=0.$ The uniqueness of the critical point of $h_{uv}$ follows from the fact that the equation
$$h_{uv}'(t)=2\theta_{1}t-(p+q)\theta_{2}t^{p+q-1}=0$$
has a unique positive solution $\bigl(\frac{2\theta_{1}}{(p+q)\theta_{2}}\bigr)^{\frac{1}{p+q-2}}$.
\end{proof}

\begin{Lemma}\label{le22} For any $(u,v)\in H\backslash\{(0,0)\}$, there exists a unique $\bar{t}:=t(u,v)>0$ such that $h$ attains its maximum at $\bar{t}$ and $c_0=\inf\limits_{(u,v)\in H}\max\limits_{t>0}\mathcal{I}(tu,tv).$ Moreover, if $\mathcal{P}(u,v)<0$, we get $\bar{t}\in(0,1)$.
\end{Lemma}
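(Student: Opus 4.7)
The plan is to reduce the whole statement to the one-variable function $h_{uv}(t)=\mathcal{I}(tu,tv)$ and then apply Lemma \ref{le24}. A direct expansion gives
\[
h_{uv}(t)=\theta_1 t^2-\theta_2 t^{p+q},\quad \theta_1=\tfrac{1}{2}\|(u,v)\|^2,\quad \theta_2=\tfrac{2}{p+q}\int_{\mathbb R^N}(I_\alpha\ast|u|^p)|v|^q.
\]
The case relevant for $c_0$ is when both components are nontrivial, because, as observed right before the theorem, any $(u,v)\in\mathcal{N}_0$ has $u\neq 0$ and $v\neq 0$; under this restriction Hardy--Littlewood--Sobolev forces $\theta_2>0$, while $\theta_1>0$ is automatic, so Lemma \ref{le24} supplies a unique positive critical point $\bar t=t(u,v)$ which is the maximizer of $h_{uv}$.

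For the minimax formula I would exploit the identity $\mathcal{P}(tu,tv)=t\,h'_{uv}(t)$, which follows at once from $h'_{uv}(t)=\langle\mathcal{I}'(tu,tv),(u,v)\rangle$. Hence the Nehari condition $\mathcal{P}(\bar tu,\bar tv)=0$ is equivalent to $h'_{uv}(\bar t)=0$, so $(\bar tu,\bar tv)\in\mathcal{N}_0$ and
\[
\max_{t>0}\mathcal{I}(tu,tv)=\mathcal{I}(\bar tu,\bar tv)\ge c_0.
\]
Conversely, if $(u,v)\in\mathcal{N}_0$ then uniqueness forces $\bar t(u,v)=1$, so $\mathcal{I}(u,v)=\max_{t>0}\mathcal{I}(tu,tv)$; taking the infimum over $\mathcal{N}_0$ reverses the inequality and yields $c_0=\inf_{(u,v)}\max_{t>0}\mathcal{I}(tu,tv)$.

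Finally, for the inclusion $\bar t\in(0,1)$ when $\mathcal{P}(u,v)<0$, observe that $h'_{uv}(1)=\langle\mathcal{I}'(u,v),(u,v)\rangle=\mathcal{P}(u,v)<0$. From the explicit expression $h'_{uv}(t)=2\theta_1 t-(p+q)\theta_2 t^{p+q-1}$ one sees that $h'_{uv}$ is positive on $(0,\bar t)$ and negative on $(\bar t,\infty)$; since the value $t=1$ lies in the latter interval, $\bar t<1$.

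There is no substantive obstacle here: the whole argument is a routine fibering/Nehari reduction driven by Lemma \ref{le24}. The only delicate point is making explicit that $\theta_2>0$, hence that one must implicitly work with pairs $(u,v)$ having both components nontrivial, which matches the reformulation of $c_0$ stated just before the lemma.
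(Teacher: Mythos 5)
Your proposal is correct and follows essentially the same route as the paper: you reduce to the fibering map $h_{uv}(t)=\theta_1t^2-\theta_2t^{p+q}$ and invoke Lemma \ref{le24}, and your sign argument $h_{uv}'(1)=\mathcal{P}(u,v)<0$ forcing $\bar t<1$ is equivalent to the paper's subtraction of the relations $\mathcal{P}(u,v)<0$ and $\mathcal{P}(\bar tu,\bar tv)=0$. You are in fact slightly more careful than the paper, since you spell out the minimax identity $c_0=\inf\max\mathcal{I}(tu,tv)$ via $\mathcal{P}(tu,tv)=t\,h_{uv}'(t)$ and note that $\theta_2>0$ requires both components nontrivial (its positivity coming simply from the positivity of the Riesz kernel rather than from Hardy--Littlewood--Sobolev, which only gives finiteness), a restriction the paper leaves implicit in its reformulation of $c_0$.
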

\begin{proof}
Let $(u,v)\in H\backslash\{(0,0)\}$ be fixed and for $t>0,$ by making the change of variable $s = t^{p+q}$, we get
$$\aligned h_{uv}(s) &=\frac{s^{\frac{2}{p+q}}}{2}\int_{\mathbb R^N}(|\nabla u|^2+|\nabla v|^2+A(x)|u|^2+B(x)|v|^2)\\
&\ \ \ \ -\frac{2s}{p+q}\int_{\mathbb R^N}(I_\alpha\ast|u|^p)|v|^q.\endaligned$$
Lemma \ref{le24} implies that $h_{uv}$ has a unique critical point $\bar{t}>0$ corresponding to its maximum, i.e., $h_{uv}(\bar{t})=\max\limits_{t>0}h_{uv}(t)$, $h_{uv}'(\bar{t})=0,$ and hence $\mathcal{P}(\bar{t}u,\bar{t}v)=0$ and $(\bar{t}u,\bar{t}v)\in \mathcal{N}_0.$ In addition, suppose that $\mathcal{P}(u,v)<0$. Then we have
$$\int_{\mathbb R^{N}}(|\nabla u|^{2}+A(x)|u|^2+|\nabla v|^{2}+B(x)|v|^{2})-2\int_{\mathbb R^{N}}(I_\alpha\ast|u|^{p})|v|^{q}<0, $$
and
$$\bar{t}^{2}\int_{\mathbb R^{N}}(|\nabla u|^{2}+A(x)|u|^2+|\nabla v|^{2}+B(x)|v|^{2})-2\bar{t}^{p+q}\int_{\mathbb R^{N}}(I_\alpha\ast|u|^{p})|v|^{q}=0.$$
Thus, by the above two formulas we can get that
$$\left(\bar{t}^{p+q}-\bar{t}^{2}\right)\int_{\mathbb R^{N}}(|\nabla u|^{2}+A(x)|u|^2+|\nabla v|^{2}+B(x)|v|^{2})<0,  $$
which implies that $\bar{t}<1$. This finishes the proof.
\end{proof}

\begin{Lemma}\label{le2.3}
The $\mathcal{N}_0$ is a $C^{1}$ manifold and every critical point of $\mathcal{I}|_{\mathcal{N}_0}$ is a critical point of $\mathcal{I}$ in $H.$
\end{Lemma}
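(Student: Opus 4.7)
The plan is the standard Nehari-manifold argument: show that $\mathcal{P}$ satisfies a ``nondegeneracy'' condition $\langle \mathcal{P}'(u,v),(u,v)\rangle\neq 0$ on $\mathcal{N}_0$, from which both claims follow at once. First I would write $\mathcal{P}$ explicitly: using the symmetry $\int(I_\alpha\ast|v|^q)|u|^p=\int(I_\alpha\ast|u|^p)|v|^q$ noted just before the definition of $\mathcal{I}$, the derivative of the nonlocal term in the direction $(u,v)$ contributes $(p+q)/(p+q)\cdot 2\int(I_\alpha\ast|u|^p)|v|^q$, so
$$\mathcal{P}(u,v)=\|(u,v)\|^2-2\int_{\mathbb R^N}(I_\alpha\ast|u|^p)|v|^q.$$

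Next I would differentiate $\mathcal{P}$ again in the direction $(u,v)$, exploiting the fact that the nonlocal term is $(p+q)$-homogeneous in $(u,v)$. This gives
$$\langle \mathcal{P}'(u,v),(u,v)\rangle=2\|(u,v)\|^2-2(p+q)\int_{\mathbb R^N}(I_\alpha\ast|u|^p)|v|^q.$$
On $\mathcal{N}_0$, the constraint $\mathcal{P}(u,v)=0$ lets me replace the integral by $\tfrac12\|(u,v)\|^2$, and I obtain
$$\langle \mathcal{P}'(u,v),(u,v)\rangle=(2-(p+q))\,\|(u,v)\|^2.$$
Since $p+q>\tfrac{2(N+\alpha)}{N}>2$ and $(u,v)\neq(0,0)$ by definition of $\mathcal{N}_0$, this quantity is strictly negative. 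In particular $\mathcal{P}'(u,v)\neq 0$ everywhere on $\mathcal{N}_0$, so $0$ is a regular value of $\mathcal{P}\colon H\setminus\{(0,0)\}\to\mathbb R$, and $\mathcal{N}_0$ is a $C^1$-submanifold of codimension one by the implicit function theorem.

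For the second assertion, suppose $(u,v)\in\mathcal{N}_0$ is a critical point of $\mathcal{I}|_{\mathcal{N}_0}$. By the Lagrange multiplier rule there exists $\lambda\in\mathbb R$ with $\mathcal{I}'(u,v)=\lambda\,\mathcal{P}'(u,v)$ in $H^{-1}$. Pairing both sides with $(u,v)$ and using $\mathcal{P}(u,v)=\langle\mathcal{I}'(u,v),(u,v)\rangle=0$ yields $0=\lambda\langle \mathcal{P}'(u,v),(u,v)\rangle$. The previous step showed that the right-hand factor is nonzero, whence $\lambda=0$ and therefore $\mathcal{I}'(u,v)=0$ in $H$.

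The computation is routine; the only point worth handling carefully is the reduction of $\langle\mathcal{P}'(u,v),(u,v)\rangle$ on $\mathcal{N}_0$, which relies on the $(p+q)$-homogeneity and the assumed strict inequality $p+q>2$, both already available from the standing hypothesis $\tfrac{N+\alpha}{N}<p,q<2^\alpha_*$. No genuine obstacle arises, and the argument is independent of whether $A$ is bounded or periodic, so it can be used in the proofs of both Theorem \ref{th1.1} and Theorem \ref{th1.2}.
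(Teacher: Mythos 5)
Your proof is correct and follows essentially the same Nehari-manifold strategy as the paper: nondegeneracy of the constraint $\mathcal{P}$ on $\mathcal{N}_0$ plus the Lagrange multiplier rule. The only minor difference is that you verify $\mathcal{P}'(u,v)\neq 0$ by the direct computation $\langle \mathcal{P}'(u,v),(u,v)\rangle=(2-p-q)\|(u,v)\|^2<0$ on $\mathcal{N}_0$ (which also cleanly justifies $\rho=0$ in the multiplier step), whereas the paper argues by contradiction from $\mathcal{P}'(u,v)=0$ via the associated Euler--Lagrange system, and additionally records inside the same lemma the auxiliary facts that $\mathcal{N}_0$ is bounded away from zero and $c_0>0$, which are used later but are not required by the statement itself.
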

\begin{proof} By Lemma \ref{le22}, $\mathcal{N}_0\neq\emptyset$.  The proof consists of four steps.

{\bf Step 1.}  $\mathcal{N}_0$ is bounded away from zero.

For any $(u,v)\in \mathcal{N}_0$, by using $\mathcal{P}(u,v)=0$, the semigroup property of the Riesz potential $I_\alpha=I_{\frac{\alpha}{2}}\ast I_{\frac{\alpha}{2}}$ \cite[Theorem 5.9 and Corollary 5.10]{lil}, Cauchy-Schwarz inequality, \eqref{eqHLS}, Sobolev and Young inequalities, we obtain that
\begin{equation}\label{009}
\aligned\|(u,v)\|^2&=\mathcal{P}(u,v)+2\int_{\mathbb R^N}(I_\alpha\ast|u|^p)|v|^q\\
&=2\int_{\mathbb R^{N}}(I_{\frac{\alpha}{2}}\ast|u|^p)(I_{\frac{\alpha}{2}}\ast|v|^q)\\
&\leq2\bigg(\int_{\mathbb R^{N}}(I_{\frac{\alpha}{2}}\ast|u|^p)^{2}\bigg)^\frac{1}{2}\bigg(\int_{\mathbb R^{N}}(I_{\frac{\alpha}{2}}\ast|v|^q)^{2}\bigg)^{\frac{1}{2}}\\
&=2\bigg(\int_{\mathbb R^{N}}(I_{\alpha}\ast|u|^p)|u|^p\bigg)^\frac{1}{2}\bigg(\int_{\mathbb R^{N}}(I_{\alpha}\ast|v|^q)|v|^q\bigg)^{\frac{1}{2}}\\
&\leq C_1\bigg(\int_{\mathbb R^{N}}|u|^{\frac{2Np}{N+\alpha}}\bigg)^\frac{N+\alpha}{2N}\bigg(\int_{\mathbb R^{N}}|v|^{\frac{2Nq}{N+\alpha}} \bigg)^\frac{N+\alpha}{2N}\\
&\leq C_2\|u\|_{H^1}^{p }\|v\|_{H^1}^q\\
&\leq C\|(u,v)\|^{2(p+q)}.\endaligned
\end{equation}
Hence, there is $C' >0$ such that $\|(u,v)\|\geq C'$. This proves that $\mathcal{N}_0$ is bounded away from
zero.

{\bf Step 2.}  $c_0>0$.

Set
$$\aligned \bar{\mathcal{I}}(u,v)&:=\frac{1}{2}\int_{\mathbb R^N}(|\nabla u|^2+A_0|u|^2+|\nabla v|^2+B|v|^2)\\
&\ \ \ \ -\frac{2}{p+q}\int_{\mathbb R^N}(I_\alpha\ast|u|^p)|v|^q,\endaligned$$
where $A_{0}$ and $B$ come from conditions $(F_{0}) $ and $(F_{3}) $, respectively. Obviously, $\bar{\mathcal{I}}(u,v)\leq \mathcal{I}(u,v)$, which implies that for any $u,\ v\neq0$,
$$\bar{c_0}:=\inf_{(u,v)\in H}\max_{t>0}\bar{\mathcal{I}}(tu ,tv )\leq \inf_{(u,v)\in H}\max_{t>0}\mathcal{I}(tu,tv)=c_0.$$
Then, it suffices to prove that $\bar{c_0}>0.$ Define
$$ \bar{\mathcal{N}_0}=\{(u,v)\in H\backslash\{(0,0)\} \ | \ g'_{uv}(1)=0\},$$
where $g_{uv}(t)=\bar{\mathcal{I}}(tu ,tv ).$ By Lemma \ref{le22} applied to $A(x)\equiv A_{0},\ B(x)\equiv B $, we get that
$$\bar{c_0}=\inf_{(u,v)\in\bar{\mathcal{N}_0}}\bar{\mathcal{I}}(u,v).$$
The {\bf Step 1} above shows that $\int_{\mathbb R^N}(|\nabla u |^2+|\nabla v |^2+A_0|u|^2+B|v |^2)$ is bounded away from zero on $\bar{\mathcal{N}_0} $. Then
$$\aligned \bar{\mathcal{I}}(u,v)&=\bar{\mathcal{I}}(u,v)-\frac{1}{p+q}\mathcal{P}(u,v)\\
&= \frac{p+q-2}{2(p+q)}\int_{\mathbb R^N}(|\nabla u |^2+|\nabla v |^2+A_{0}|u|^{2}+B|v |^2)\\
&> 0.\endaligned$$

{\bf Step 3.} $\mathcal{P}'(u,v)\neq0$ for all $(u,v)\in\mathcal{N}_0$, hence $\mathcal{N}_0$ is a $C^{1}$ manifold.

Just suppose that $\mathcal{P}'(u,v)=0$ for some $(u,v)\in\mathcal{N}_0$, then $(u,v)$ satisfies
$$\left\{\aligned &-\Delta u+A(x)u=p\bigl(I_\alpha\ast|v|^q\bigr)|u|^{p-2}u,\\
&-\Delta v+B(x)v=q\bigl(I_\alpha\ast|u|^p\bigr)|v|^{q-2}v,\endaligned\right.$$
$$\int_{\mathbb R^{N}}(|\nabla u|^{2}+A(x)| u|^{2})=p \int_{\mathbb R^{N}}(I_\alpha\ast|v|^q)|u|^{p},$$
and
$$\int_{\mathbb R^{N}}(|\nabla v|^{2}+B(x)| v|^{2})=q\int_{\mathbb R^{N}}(I_\alpha\ast|u|^{p})|v|^q.$$
Therefore,
$$\aligned 0&=\mathcal{P}(u,v)\\
&=\|(u,v)\|^2-2\int_{\mathbb R^{N}}(I_\alpha\ast|u|^{p}|v|^q)\\
&=(p+q-2)\int_{\mathbb R^{N}}(I_\alpha\ast|u|^{p}|v|^q)\\
&>0,\endaligned$$ which is a contradiction. Hence $\mathcal{P}'(u,v)\neq0$ for any $(u,v)\in\mathcal{N}_0$.

{\bf Step 4.} Every critical point of $\mathcal{I}|_{\mathcal{N}_0}$ is a critical point of $\mathcal{I}$ in $H.$

If $(u,v)$ is a critical point of $\mathcal{I}|_{\mathcal{N}_0}$, i.e., $(u,v)\in \mathcal{N}_0$ and $(\mathcal{I}|_{\mathcal{N}_0})'(u,v)=0.$ Thanks to the Lagrange multiplier rule, there exists $\rho\in \mathbb R$ such that $\mathcal{I}'(u,v)=\rho\mathcal{P}'(u,v)$, i.e., $0=\langle\mathcal{I}'(u,v),(u,v)\rangle=\rho \langle\mathcal{P}'(u,v),(u,v)\rangle$, by {\bf Step 1}, we know that $\langle\mathcal{P}'(u,v),(u,v)\rangle\neq0$, we deduce that $\rho=0$, and then $\mathcal{I}'(u,v)=0$.
\end{proof}

The following Lemma is a Br\'{e}zis-Lieb lemma for the nonlocal term of the functional.

\begin{Lemma}\label{le2.2}
Let $u_{n}\rightharpoonup u$ and $v_{n}\rightharpoonup v$ in $H^{1}(\mathbb R^{N})$. If $u_{n}\rightarrow u$ and $v_{n}\rightarrow v$  almost everywhere on $\mathbb R^{N}$ as $n\rightarrow\infty$, then
$$\aligned&\ \lim_{n\rightarrow\infty}\int_{\mathbb R^{N}}(I_{\alpha}\ast|u_{n}|^{p})|v_{n}|^{q}-\int_{\mathbb R^{N}}(I_{\alpha}\ast|u|^{p})|v|^{q}\\
=&\ \lim_{n\rightarrow\infty}\int_{\mathbb R^{N}}(I_{\alpha}\ast|u_{n}-u|^{p})|v_{n}-v|^{q}.\endaligned$$

\end{Lemma}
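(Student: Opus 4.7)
The plan is to reduce the nonlocal integrals to inner products on $L^{2}(\mathbb R^{N})$ via the semigroup property of the Riesz potential, and then run a standard Br\'ezis--Lieb / weak-convergence expansion. Writing $a_{n}:=I_{\alpha/2}\ast|u_{n}|^{p}$, $a:=I_{\alpha/2}\ast|u|^{p}$, $A_{n}:=I_{\alpha/2}\ast|u_{n}-u|^{p}$ and analogously $b_{n},b,B_{n}$ for the $v$-side, the identity used in Step~1 of Lemma~\ref{le2.3} gives
$$\int_{\mathbb R^{N}}(I_{\alpha}\ast|u_{n}|^{p})|v_{n}|^{q}=\int_{\mathbb R^{N}}a_{n}b_{n},$$
and similarly for the other two integrals. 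The target identity becomes
$$\lim_{n\to\infty}\Bigl(\int a_{n}b_{n}-\int ab\Bigr)=\lim_{n\to\infty}\int A_{n}B_{n}.$$

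First I would establish the two key convergences in $L^{2}(\mathbb R^{N})$. Since $u_{n}\rightharpoonup u$ in $H^{1}$ and $u_{n}\to u$ a.e., the sequence $|u_{n}|^{p}$ is bounded in $L^{2N/(N+\alpha)}$ (by Sobolev embedding, since $\frac{N+\alpha}{N}<p<2_{*}^{\alpha}$), and the classical Br\'ezis--Lieb lemma yields
$$|u_{n}|^{p}-|u_{n}-u|^{p}-|u|^{p}\longrightarrow 0\quad\text{in }L^{\frac{2N}{N+\alpha}}(\mathbb R^{N}).$$
The Hardy--Littlewood--Sobolev inequality \eqref{eqHLS} applied with $s=2N/(N+\alpha)$ makes $I_{\alpha/2}$ a bounded linear operator $L^{2N/(N+\alpha)}\to L^{2}$, so convolving with $I_{\alpha/2}$ gives $a_{n}=a+A_{n}+\varepsilon_{n}$ with $\varepsilon_{n}\to0$ strongly in $L^{2}$. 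Moreover, weak convergence $|u_{n}|^{p}\rightharpoonup|u|^{p}$ in $L^{2N/(N+\alpha)}$ (bounded plus a.e.\ convergence) then yields $a_{n}\rightharpoonup a$ in $L^{2}$, and consequently $A_{n}\rightharpoonup 0$ in $L^{2}$. The exact same argument with $q$ in place of $p$ produces $b_{n}=b+B_{n}+\delta_{n}$ with $\delta_{n}\to0$ strongly, $b_{n}\rightharpoonup b$, and $B_{n}\rightharpoonup 0$ in $L^{2}$.

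Next I would multiply out and check term by term:
$$\int a_{n}b_{n}=\int ab+\int aB_{n}+\int A_{n}b+\int A_{n}B_{n}+R_{n},$$
where $R_{n}$ collects the six products involving either $\varepsilon_{n}$ or $\delta_{n}$. Each piece of $R_{n}$ is $o(1)$ by Cauchy--Schwarz, using that $\|a\|_{2},\|b\|_{2}$ are fixed, $\|A_{n}\|_{2},\|B_{n}\|_{2}$ are bounded (HLS again), and $\varepsilon_{n},\delta_{n}\to0$ in $L^{2}$. The two remaining mixed terms $\int aB_{n}$ and $\int A_{n}b$ vanish in the limit because $B_{n}\rightharpoonup0$, $A_{n}\rightharpoonup 0$ weakly in $L^{2}$ tested against the fixed $L^{2}$ functions $a,b$. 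Rearranging gives $\int a_{n}b_{n}-\int ab-\int A_{n}B_{n}\to0$, and translating back through the semigroup identity yields the claimed Br\'ezis--Lieb equality.

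The main obstacle I anticipate is the bookkeeping of the cross terms; in particular, one needs the correct Lebesgue exponent so that \emph{both} $I_{\alpha/2}\ast|u_{n}|^{p}$ and $I_{\alpha/2}\ast|v_{n}|^{q}$ land in $L^{2}(\mathbb R^{N})$ (which is precisely what the choice $s=2N/(N+\alpha)$ accomplishes) and the careful distinction between strong $L^{2}$ convergence of $\varepsilon_{n},\delta_{n}$ and merely weak $L^{2}$ convergence of $A_{n},B_{n}$. Everything else is an application of HLS, Sobolev embedding, and the scalar Br\'ezis--Lieb lemma.
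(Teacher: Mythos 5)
Your argument is correct, and it reaches the conclusion by a route that differs from the paper's in its bookkeeping. The paper does not symmetrize: it writes the difference directly as
$\int (I_{\alpha}\ast(|u_n|^p-|u_n-u|^p))|v_n|^q+\int (I_{\alpha}\ast|u_n-u|^p)(|v_n|^q-|v_n-v|^q)$,
then pairs strong convergence of one factor in $L^{\frac{2N}{N-\alpha}}$ (via HLS applied to $I_{\alpha}$) against weak convergence of the other in $L^{\frac{2N}{N+\alpha}}$, handling the two terms asymmetrically. You instead use the semigroup property $I_{\alpha}=I_{\alpha/2}\ast I_{\alpha/2}$ to recast all three integrals as $L^{2}$ inner products of $a_n=I_{\alpha/2}\ast|u_n|^p$ and $b_n=I_{\alpha/2}\ast|v_n|^q$, decompose $a_n=a+A_n+\varepsilon_n$, $b_n=b+B_n+\delta_n$, and expand; everything then reduces to Cauchy--Schwarz in $L^{2}$ plus weak convergence $A_n,B_n\rightharpoonup0$. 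Your version treats the $u$- and $v$-sides on an equal footing and needs only the single exponent $s=\frac{2N}{N+\alpha}$ mapped into $L^{2}$, at the cost of a nine-term expansion (incidentally there are five, not six, remainder products containing $\varepsilon_n$ or $\delta_n$, but all vanish for the reason you give); the paper's version is shorter because only two correction terms appear. Both proofs ultimately rest on the same two ingredients: the refined Br\'ezis--Lieb convergence $|u_n|^p-|u_n-u|^p\to|u|^p$ in $L^{\frac{2N}{N+\alpha}}$ --- note this is the Moroz--Schaftingen variant (Lemma 2.5 of \cite{mor}, which the paper cites), not literally the classical Br\'ezis--Lieb lemma, though it follows from the same circle of ideas --- and the boundedness of the Riesz potential furnished by \eqref{eqHLS}. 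The semigroup identity you invoke is legitimate here since $|u_n|^p,|v_n|^q\in L^{\frac{2N}{N+\alpha}}(\mathbb R^N)$ by Sobolev embedding, exactly as in Step 1 of Lemma \ref{le2.3}.
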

\begin{proof}
 For every $n\in\mathbb N$, one has
$$\begin{aligned}
&\int_{\mathbb R^{N}}(I_{\alpha}\ast|u_n|^p) |v_n|^q-\int_{\mathbb R^{N}}(I_{\alpha}\ast|u_n-u|^p) |v_n-v|^q\\
=&\int_{\mathbb R^{N}}(I_{\alpha}\ast(|u_n|^p-|u_n-u|^p))|v_n|^q\\
&\ +\int_{\mathbb R^{N}}(I_{\alpha}\ast|u_n-u|^p)(|v_n|^q-|v_n-v|^q).
\end{aligned}$$
Since $u_n\rightharpoonup u$ in $H^1(\mathbb{R}^N)$, by \cite[Lemma 2.5]{mor} with $q= p$ and $r =\frac{2Np}{N+\alpha}$, one has
$\int_{\mathbb R^{N}}(|u_n|^p-|u_n-u|^p-|u|^p)^{\frac{2N}{N+\alpha}}\rightarrow0\ \ \hbox{as}\ \  n\rightarrow\infty,$
which means that $|u_n|^p-|u_n-u|^p\rightarrow|u|^p$ in $L^{\frac{2N}{N+\alpha}}(\mathbb{R}^N)$. By (\ref{eqHLS}),
the Riesz potential is a linear bounded map from $L^{\frac{2N}{N+\alpha}}(\mathbb R^N)$ to $L^{\frac{2N}{N-\alpha}}(\mathbb R^N)$, which implies that $I_{\alpha}\ast(|u_n|^p-|u_n-u|^p)\rightarrow I_{\alpha}\ast|u|^p$  in $L^{\frac{2N}{N-\alpha}}(\mathbb{R}^N)$. Since $v_n\rightharpoonup v$ in $H^1(\mathbb{R}^N)$, by $|v_n|^q\rightharpoonup|v|^q$ in $L^{\frac{2N}{N+\alpha}}(\mathbb{R}^N)$, we may obtain that
$\int_{\mathbb R^{N}}(I_{\alpha}\ast(|u_n|^p-|u_n-u|^p))|v_n|^q\rightarrow\int_{\mathbb R^{N}}(I_{\alpha}\ast|u|^p)|v|^q\ \ \hbox{as}\ \ n\rightarrow\infty.$
Similarly, according to $v_n\rightharpoonup v$ in $H^1(\mathbb{R}^N)$, by \cite[Lemma 2.5]{mor} with $r =\frac{2Np}{N+\alpha}$, one has $|v_n|^q-|v_n-v|^q\rightarrow |v|^q$ in $L^{\frac{2N}{N+\alpha}}(\mathbb{R}^N)$.
Since $|u_n-u|^p\rightharpoonup 0$ in $L^{\frac{2N}{N+\alpha}}(\mathbb{R}^N)$, by the Riesz potential is a linear bounded map from $L^{\frac{2N}{N+\alpha}}(\mathbb R^N)$ to $L^{\frac{2N}{N-\alpha}}(\mathbb R^N)$ and the Hardy-Littlewood-Sobolev inequality \eqref{eqHLS}, this implies that $I_{\alpha}\ast|u_n-u|^p\rightharpoonup 0$ in $L^{\frac{2N}{N-\alpha}}(\mathbb{R}^N)$, we may obtain that
$\int_{\mathbb R^{N}}(I_{\alpha}\ast|u_n-u|^p)(|v_n|^q-|v_n-v|^q)\rightarrow0$ as $n\rightarrow\infty,$
we reach the conclusion.
\end{proof}

\mysection {The proof of Theorem \ref{th1.1}}

In this section, we consider $B(x)\equiv B>0$ and prove Theorem \ref{th1.1}.
\vskip5pt

\noindent{\bf Proof of Theorem \ref{th1.1}.} Let $(u_{n},v_{n})\in\mathcal{N}_0$ be a minimizing sequence for $c_0$, which was given in (\ref{eqc}). We first show that $\{(u_n,v_n)\}$ is bounded in $H$. For $n$ large enough, we get that
\begin{equation}\label{eq3}
\aligned &\ c_0+o_n(1)\\
\geq&\ \mathcal{I}(u_n,v_n)-\frac{1}{p+q}\mathcal{P}(u_n,v_n)\\
=&\ \frac{p+q-2}{2(p+q)}\int_{\mathbb R^N}(|\nabla u_n|^2+|\nabla v_n|^2+A(x) |u_n|^2+B|v_n|^2).\endaligned
\end{equation}
Then, there exist a subsequence of $\{u_{n}\}$, $\{v_{n}\}$ (still denoted by $\{u_{n}\}$, $\{v_{n}\}$) such that $u_{n}\rightharpoonup u$ and $v_{n}\rightharpoonup v$ in $H^{1}(\mathbb R^{N})$. This implies in particular that $\{|u_{n}|^p\}$ and $\{|v_{n}|^q\}$ are bounded in $L^{\frac{2N}{N+\alpha}}(\mathbb R^{N})$, $p,q\in(\frac{N+\alpha}{N},2_*^\alpha)$. The proof consists of three steps.
\vskip4pt
{\bf Step 1.} $\int_{\mathbb R^{N}}(I_\alpha\ast|u_{n}|^{p})|v_{n}|^{q}\not\rightarrow0$.
\vskip4pt
From the {\bf Step 2} in Lemma \ref{le2.3} and
$$\aligned\mathcal{I}(u_{n},v_{n})&=\frac{1}{2}\int_{\mathbb R^{N}}(|\nabla u_n|^2+|\nabla v_n|^2+A(x) |u_n|^2+B|v_n|^2)\\
&\ \ \ \ -\frac{2}{p+q}\int_{\mathbb R^{N}}(I_\alpha\ast|u_{n}|^{p})|v_{n}|^{q}\rightarrow c_{0}>0,\endaligned$$
we can obtain $\|(u_{n},v_{n})\|\not\rightarrow0.$ By using Lemma \ref{le22}, for any $t > 1$,
$$\aligned c_{0}\leftarrow\mathcal{I}(u_{n},v_{n})&\geq\mathcal{I}(tu_{n},tv_{n})\\
&=\frac{t^{2}}{2}\int_{\mathbb R^{N}}(|\nabla u_n|^2+|\nabla v_n|^2+A(x) |u_n|^2+B|v_n|^2)\\
&\ \ \ \ -\frac{2t^{p+q}}{p+q}\int_{\mathbb R^{N}}(I_\alpha\ast|u_{n}|^{p})|v_{n}|^{q}\\
&\geq\frac{t^{2}}{2}\delta-\frac{2t^{p+q}}{p+q}\int_{\mathbb R^{N}}(I_\alpha\ast|u_{n}|^{p})|v_{n}|^{q},\endaligned$$
where $\delta$ is a fixed constant. It suffices to take $t>1$ so that $\frac{t^{2}\delta}{2}>2c_{0}$ to get a lower bound for $\int_{\mathbb R^{N}}(I_\alpha\ast|u_{n}|^{p})|v_{n}|^{q}$. Therefore, we may assume that
\begin{equation}\label{eq0.1}
\int_{\mathbb R^{N}}(I_\alpha\ast|u_{n}|^{p})|v_{n}|^{q}\rightarrow D\in(0,\infty).
\end{equation}

\vskip4pt
{\bf Step 2.} Splitting by concentration-compactness.
\vskip4pt
By using (\ref{eq0.1}), the semigroup property of the Riesz potential $I_\alpha=I_{\frac{\alpha}{2}}\ast I_{\frac{\alpha}{2}}$, Cauchy-Schwarz inequality, \eqref{eqHLS},  and the concentration-compactness lemma of Lions \cite[Lemma 2.1]{wi}, there exist $\delta>0$ and $\{x_{n}\}\subset\mathbb R^{N}$  such that
\begin{equation}\label{eq3.3}
\int_{B_{x_{n}}(1)}|u_{n}|^{\frac{2Np}{N+\alpha}}>\delta>0.
\end{equation}
Let $\eta_{R}(t)$ be a smooth function defined on $[0,+\infty)$ such that
\vskip4pt
a) $\eta_{R}(t)=1$ for $0\leq t\leq R;$

b) $\eta_{R}(t)=0$ for $t\geq2R$;

c) $\eta'_{R}(t)\leq\frac{2}{R}.$
\vskip4pt
Define
$$ \theta_{n}(x)=\eta_{R}(|x-x_{n}|)u_{n}(x),$$
$$\lambda_{n}(x)=(1-\eta_{R}(|x-x_{n}|))u_{n}(x),$$
$$\xi_{n}(x)=\eta_{R}(|x-x_{n}|)v_{n}(x)$$
and
$$\mu_{n}(x)=(1-\eta_{R}(|x-x_{n}|))v_{n}(x).$$
Obviously, $(\theta_{n},\xi_{n}),(\lambda_{n},\mu_{n})\in X$ and
$(u_{n},v_{n})=( \theta_{n},\xi_{n})+(\lambda_{n},\mu_{n})=( \theta_{n}+\lambda_{n},\xi_{n}+\mu_{n}).$ Observe that in particular
\begin{equation}\label{eq03.4}
\liminf_{n\rightarrow+\infty}\int_{B_{x_{n}}(R)}|\theta_{n}|^{\frac{2Np}{N+\alpha}}\geq\delta.
\end{equation}
\vskip4pt

{\bf Step 3.}
There exists $\tau> 0$ independent of $\varepsilon$ and $n_{0}= n_{0}(\varepsilon)$ such that $\|(\lambda_{n},\mu_{n})\|\leq C\varepsilon^{\tau}$ for all $ n\geq n_{0}$.
\vskip4pt
Let $(\omega_{n},\nu_{n})=(u_{n}(\cdot+ x_{n}),v_{n}(\cdot+ x_{n}))$, where $\{x_{n}\}$ is given in (\ref{eq3.3}). Clearly, $\omega_{n}\rightharpoonup \omega $ and $\nu_{n}\rightharpoonup \nu$ in $H^{1}(\mathbb R^{N})$. By taking a larger $R$, we may assume that
$\int_{B_{2R\backslash R}}(I_\alpha\ast|\omega|^{\alpha})|\nu|^{\beta}<\varepsilon ,$
where $ B_{2R\backslash R}$ denotes the anulus centered in 0 with radii $R$ and $2R$. Then,  for $n$ large enough,\\

\begin{equation}\label{eq33}
\aligned &\ \bigg| \int_{\mathbb R^{N}}\Bigl((I_\alpha\ast|u_{n}|^{p})|v_{n}|^{q}-(I_\alpha\ast| \theta_{n}|^{p})|\xi_{n}|^{q}-(I_\alpha\ast|\lambda_{n}|^{p})|\mu_{n}|^{q}\Bigr)\bigg|\\
=&\ \int_{\mathbb R^{N}}\Bigl((I_\alpha\ast|u_{n}|^{p})|v_{n}|^{q}-(I_\alpha\ast| \eta_R(|x-x_n|)u_{n}|^{p})|\eta_R(|x-x_n|)v_{n}|^{q}\\
&\ -(I_\alpha\ast|(1-\eta_R(|x-x_n|))u_{n}|^{p})|(1-\eta_R(|x-x_n|))v_{n}|^{q}\Bigr)\\
=&\ \int_{\mathbb R^{N}}(I_\alpha\ast|u_{n}|^{p})|v_{n}|^{q}-\int_{B_R}(I_\alpha\ast| \eta_R(|x-x_n|)u_{n}|^{p})|\eta_R(|x-x_n|)v_{n}|^{q}\\
&\ -\int_{B_{2R\backslash R}}(I_\alpha\ast| \eta_R(|x-x_n|)u_{n}|^{p})|\eta_R(|x-x_n|)v_{n}|^{q}\\
&\ -\int_{B_{2R}^c}(I_\alpha\ast|(1-\eta_R(|x-x_n|))u_{n}|^{p})|(1-\eta_R(|x-x_n|))v_{n}|^{q}\\
&\ -\int_{B_{2R\backslash R} }(I_\alpha\ast|(1-\eta_R(|x-x_n|))u_{n}|^{p})|(1-\eta_R(|x-x_n|))v_{n}|^{q}\\
=&\ \int_{\mathbb R^{N}}(I_\alpha\ast|u_{n}(x+x_n)|^{p})|v_{n}(x+x_n)|^{q}\\
&\ -\int_{B_R}(I_\alpha\ast| \eta_Ru_{n}(x+x_n)|^{p})|\eta_Rv_{n}(x+x_n)|^{q}\\
&\ -\int_{B_{2R\backslash R}}(I_\alpha\ast| \eta_Ru_{n}(x+x_n)|^{p})|\eta_Rv_{n}(x+x_n)|^{q}\\
&\ -\int_{B_{2R}^c}(I_\alpha\ast|(1-\eta_R)u_{n}(x+x_n)|^{p})|(1-\eta_R)v_{n}(x+x_n)|^{q}\\
&\ -\int_{B_{2R\backslash R} }(I_\alpha\ast|(1-\eta_R)u_{n}(x+x_n)|^{p})|(1-\eta_R)v_{n}(x+x_n)|^{q}\\
=&\ \int_{\mathbb R^{N}}(I_\alpha\ast|\omega_{n}|^{p})|\nu_{n}|^{q}-\int_{B_R}(I_\alpha\ast| \omega_n|^{p})|\nu_n|^{q}\\
&\ -\int_{B_{2R\backslash R}}(I_\alpha\ast| \eta_R\omega_{n}|^{p})|\eta_R\nu_{n}|^{q}-\int_{B_{2R}^c}(I_\alpha\ast|\omega_{n}|^{p})|\nu_{n}|^{q}\\
&\ -\int_{B_{2R\backslash R} }(I_\alpha\ast|(1-\eta_R)\omega_{n}|^{p})|(1-\eta_R)\nu_{n}|^{q}\\
=&\ \int_{B_{2R\backslash R}}(1-\eta_R^{p+q}-(1-\eta_R)^{p+q})(I_\alpha\ast|\omega_n|^p)|\nu_n|^q\\
\leq&\ 3\varepsilon.\endaligned
\end{equation}
Since $|\nabla \omega_{n}|^{2}$ and $|\nabla \nu_{n}|^{2}$ are uniformly bounded in  $L^{1}(\mathbb R^{N})$, up to a subsequence,  $|\nabla \omega_{n}|^{2}$,  $|\nabla \nu_{n}|^{2}$ converge (in the sense of measure) to a certain positive measure  $\mu(\mathbb R^{N})<+\infty$.  By enlarging $R$, we may assume that $\mu(B_{2R\backslash R})<\varepsilon$.  Thus, for $R$ large enough,
$$\int_{B_{2R\backslash R}}|\nabla \omega_{n}|^{2} =\int_{B_{2R\backslash R}}d\mu<\varepsilon ,\ \ \ \ \int_{B_{2R\backslash R}}|\nabla \nu_{n}|^{2} =\int_{B_{2R\backslash R}}d\mu<\varepsilon ,$$
by using  H\"{o}lder inequality, we have
\begin{equation}\label{eq34}
\aligned \bigg| \int_{\mathbb R^{N}}(|\nabla u_{n}|^{2} -|\nabla  \theta_{n}|^{2}-|\nabla \lambda_{n}|^{2}) \bigg|=\bigg|2\int_{\mathbb R^{N}}\nabla \theta_{n}\nabla \lambda_{n}\bigg|\leq C\varepsilon,\endaligned
\end{equation}
\begin{equation}\label{eq35}
\aligned \bigg| \int_{\mathbb R^{N}}(|\nabla v_{n}|^{2} -|\nabla \xi_{n}|^{2}-|\nabla \mu_{n}|^{2}) \bigg|=\bigg|2\int_{\mathbb R^{N}}\nabla \xi_{n}\nabla \mu_{n}\bigg|\leq C\varepsilon.\endaligned
\end{equation}
Since $\omega_n\rightarrow \omega,\nu_n\rightarrow\nu$ in $L_{loc}^2(\mathbb R^{N})$, take $R$ such that $\int_{B_{2R\backslash R}}|\omega|^2<\varepsilon$ and $\int_{B_{2R\backslash R}}|\nu|^2<\varepsilon$, then
\begin{equation}\label{eq37}
\bigg|\int_{\mathbb R^{N}}\bigl(A(x)u_{n}^{2} -A(x)\theta_{n}^{2}-A(x)\lambda_{n}^{2}\bigr) \bigg|<C\varepsilon,\ \
\end{equation}
\begin{equation}\label{eq38}
 \bigg|\int_{\mathbb R^{N}}B(v_{n}^{2} -\xi_{n}^{2}-\mu_{n}^{2}) \bigg|< C\varepsilon.\ \ \ \ \ \ \ \ \ \
\end{equation}
By (\ref{eq33})-(\ref{eq38}) we obtain that for $n$ sufficiently large and $t>0$,
\begin{equation}\label{eq39}
\aligned &\ \bigg| \mathcal{I}( tu_{n},tv_{n})-\mathcal{I}(t\theta_{n},t\xi_{n})-\mathcal{I}(t\lambda_{n},t\mu_{n})\bigg|\\
\leq&\ C\varepsilon(t^{2}+t^{p+q}).\endaligned
\end{equation}
Let us denote with $t_{1},t_{2}>0$  which maximize $h_{\theta_{n}\xi_{n}}(t)$ and $h_{\lambda_{n}\mu_{n}}(t)$ respectively, i.e.,
$$\mathcal{I}(t_1\theta_{n},t_1\xi_{n})=\max_{t>0}\mathcal{I}(t\theta_{n},t\xi_{n}),\ $$ $$\mathcal{I}(t_2\lambda_{n},t_2\mu_{n})=\max_{t>0}\mathcal{I}(t\lambda_{n},t\mu_{n}).$$
First, let us assume that $t_{1}\leq t_{2}$. Then,
\begin{equation}\label{eq310}
\mathcal{I}(t\lambda_{n},t\mu_{n})\geq0,\ \   t\leq t_{1}.
\end{equation}
\vskip4pt
{\bf Claim:} There exist $0<\tilde{t}<1<\hat{t}$ independent of  $\varepsilon$ such that $t_{1}\in(\tilde{t},\hat{t})$.

Indeed, take $\hat{t}=\left((p+q)(D+1)^{-1}L\right)^{\frac{1}{p+q-2}}$, where $D$ comes from (\ref{eq0.1}) and $L$ is large enough such that  $\hat{t}>1$ and
\begin{equation}\label{eq311}
L\geq \int_{\mathbb R^{N}}(|\nabla u_{n}|^{2}+|\nabla v_{n}|^{2}+A_{\infty}u_{n}^{2}+ B v_{n}^{2}),
\end{equation}
then
$$\aligned \mathcal{I}(\hat{t}u_{n},\hat{t}v_{n})&= \frac{\hat{t}^{2}}{2}\int_{\mathbb R^{N}}\Bigl(|\nabla u_{n}|^{2}+|\nabla v_{n}|^{2}+A(x)u_{n}^{2}+ Bv_{n}^{2} \\
&\ \ \ \ -\frac{2\hat{t}^{p+q}}{p+q} (I_\alpha\ast|u_{n}|^{p})|v_{n}|^{q}\Bigr)\\
&\leq\frac{\hat{t}^{2}}{2}\int_{\mathbb R^{N}} \Bigl( |\nabla u_{n}|^{2}+|\nabla v_{n}|^{2}+A_\infty u_{n}^{2}+ Bv_{n}^{2}\\
&\ \ \ \ -\frac{4\hat{t}^{p+q-2}}{p+q}(I_\alpha\ast|u_{n}|^{p})|v_{n}|^{q}\Bigr) \\
&\leq-\frac{3}{2}L\hat{t}^{2}\\
&<0.\endaligned $$
It follows from (\ref{eq39}) that
\begin{equation}\label{eq3111}
\aligned &\ \mathcal{I}(tu_{n},tv_{n}) \geq \mathcal{I}(t\theta_{n},t\xi_{n})+\mathcal{I}(t\lambda_{n},t\mu_{n})-C\varepsilon,\ \ t\in(0,\hat{t}].\endaligned
\end{equation}
Then, taking a smaller $\varepsilon$,
$$\mathcal{I}(\hat{t}\theta_{n},\hat{t}\xi_{n})+\mathcal{I}(\hat{t}\lambda_{n},\hat{t}\mu_{n})<0.$$
Then $\mathcal{I}(\hat{t}\theta_{n},\hat{t}\xi_{n})<0$ or  $\mathcal{I}(\hat{t}\lambda_{n},\hat{t}\mu_{n})<0$. In any case Lemma \ref{le22} implies that $t_{1}<\hat{t}$.

Take  $\tilde{t}=\left(\frac{c_0}{L}\right)^{\frac{1}{2}}$, where $L$ is chosen as in (\ref{eq311}).  Then $\tilde{t}<1$. For any $t\leq \tilde{t}$,
$$\aligned \mathcal{I}(tu_{n},tv_{n})&\leq \frac{\tilde{t}^{2}}{2}\int_{\mathbb R^{N}}(|\nabla u_{n}|^{2}+|\nabla v_{n}|^{2}+A(x)u_{n}^{2}+ B v_{n}^{2})\\
&\leq \frac{c_0}{2L}\int_{\mathbb R^{N}}(|\nabla u_{n}|^{2}+|\nabla v_{n}|^{2}+A_{\infty}u_{n}^{2}+ B v_{n}^{2})\\
&\leq\frac{c_0}{2}.\endaligned $$
By (\ref{eq310}), (\ref{eq3111}) and $\mathcal{I}(t_1\theta_{n},t_1\xi_{n})\geq c_0$, we obtain
\begin{equation}\label{eq313}
\aligned \mathcal{I}(t_1u_{n},t_1v_{n})&\geq \mathcal{I}(t_1\theta_{n},t_1\xi_{n})+\mathcal{I}(t_1\lambda_{n},t_1\mu_{n})-C\varepsilon\\
&\geq c_0-C\varepsilon,\endaligned
\end{equation}
by choosing a small $\varepsilon$, we have that $\mathcal{I}(t_1u_{n},t_1v_{n})\geq \frac{c_0}{2}$. Therefore, $t_{1}>\tilde{t}$. Since $(u_{n},v_{n})\in \mathcal{N}_0$, $h_{uv}$ reaches its maximum at  $t=1$. Then,
$$c_0\leftarrow \mathcal{I}(u_{n},v_{n})\geq \mathcal{I}(t_1u_{n},t_1v_{n}), $$
and using (\ref{eq313}) we deduce, for $n$ large and $t\in(0,t_{1})$,
$$\aligned \mathcal{I}(t\lambda_{n},t\mu_{n})&\leq \mathcal{I}(t_1\lambda_{n},t_1\mu_{n})\\
&\leq \mathcal{I}(t_1u_{n},t_1v_{n})-\mathcal{I}(t_1\theta_{n},t_1\xi_{n})+C\varepsilon\\
&\leq \mathcal{I}(u_{n},v_{n})-c_0+C\varepsilon\\
&\leq2C \varepsilon.\endaligned$$ Moreover, for any $t\in(0,\tilde{t}),$
$$\aligned 2C\varepsilon&\geq \mathcal{I}(t\lambda_{n},t\mu_{n})\\
&=\frac{t^{2}}{2}\int_{\mathbb R^{N}}(|\nabla \lambda_{n}|^{2}+|\nabla \mu_{n}|^{2}+A(x)\lambda_{n}^{2}+ B\mu_{n}^{2})\\
&\ \ \ \ -\frac{2}{p+q}t^{p+q}\int_{\mathbb R^{N}}(I_\alpha\ast|\lambda_{n}|^{p})|\mu_{n}|^{q}\\
&\geq \frac{t^{2}}{2}q_{n}-Kt^{p+q},\endaligned$$
where
$$q_{n}=\int_{\mathbb R^{N}}(|\nabla \lambda_{n}|^{2}+|\nabla \mu_{n}|^{2}+A_{0}\lambda_{n}^{2}+B\mu_{n}^{2})$$
is bounded and $K>D$. Observe that $ \frac{t^{2}}{2}q_{n}-Kt^{p+q}=\frac{t^{2}}{4}q_{n}$ for $t=\left(\frac{q_{n}}{4K}\right)^{\frac{1}{p+q-2}}$. By taking a larger $K$ we may assume that $\left(\frac{q_{n}}{4K}\right)^{\frac{1}{p+q-2}}\leq \tilde{t}$. Then, we obtain
$$2C\varepsilon\geq \mathcal{I}(t\lambda_{n},t\mu_{n})\geq \left(\frac{q_{n}}{4K}\right)^{\frac{2}{p+q-2}}\frac{q_{n}}{4 }\geq Cq_{n}^{\frac{p+q}{p+q-2}},$$
which implies that
\begin{equation}\label{eq314}
\|(\lambda_{n},\mu_{n})\|\leq C\varepsilon^{\frac{p+q-2}{2(p+q)}}.
\end{equation}

In the case  $t_{1}>t_{2}$, we can argue similarly to conclude that $\|( \theta_{n},\xi_{n})\|\leq C\varepsilon^{\frac{p+q-2}{2(p+q)}}$. But, taking small  $\varepsilon$, this contradicts (\ref{eq3.3}), so (\ref{eq314}) holds.

\vskip4pt
{\bf Step 4:} The infimum of  $\mathcal{I}|_{\mathcal{N}_0}$ is achieved.
\vskip4pt
By $(\omega_{n},\nu_{n})=(u_{n}(\cdot+x_{n}),v_{n}(\cdot+x_{n})),$ $\omega_{n}\rightharpoonup \omega$ and $\nu_{n}\rightharpoonup \nu$ in $H^{1}(\mathbb R^{N})$. Moreover, by  Rellich-Kondrachov theorem, we know that $\omega_{n}\rightarrow \omega$ in $L_{loc}^{2}(\mathbb R^{N}),\nu_{n}\rightarrow\nu$ in $L_{loc}^{2}(\mathbb R^{N}).$ Then $\omega\neq0$ since, by (\ref{eq3.3}),
$$ \delta<\liminf_{n\rightarrow+\infty}\int_{\mathbb R^{N}}| \theta_{n}|^{\frac{2Np}{N+\alpha}}\leq\lim_{n\rightarrow+\infty}\int_{B_{0}(2R)}|\omega_{n}|^{\frac{2Np}{N+\alpha}}=\int_{B_{0}(2R)}|\omega|^{\frac{2Np}{N+\alpha}}.$$
Since $(u_{n},v_{n})=(\theta_{n},\xi_{n})+(\lambda_{n},\mu_{n})$, moreover
$$\|(\lambda_{n},\mu_{n})\|\leq C\varepsilon^{\frac{p+q-2}{2(p+q)}}.$$
By using H\"{o}lder inequality, one has
\begin{equation}\label{eq315}
\aligned \int_{\mathbb R^{N}}|u_{n}^{2}-\theta_{n}^{2}|&\leq \int_{\mathbb R^{N}}|\lambda_{n}|(|u_{n}|+| \theta_{n}|)\\
&\leq \bigg(\int_{\mathbb R^{N}}\lambda_{n}^{2}\bigg)^{\frac{1}{2}}\bigg(\int_{\mathbb R^{N}}(|u_{n}|+| \theta_{n}|)^{2}\bigg)^{\frac{1}{2}}\\
&\leq C\varepsilon^{\frac{p+q-2}{2(p+q)}},\endaligned
\end{equation}
and
\begin{equation}\label{eq316}
\aligned \int_{\mathbb R^{N}}|v_{n}^{2}-\xi_{n}^{2}|&\leq \int_{\mathbb R^{N}}|\mu_{n}|(|v_{n}|+|\xi_{n}|)\\
&\leq \bigg(\int_{\mathbb R^{N}}\mu_{n}^{2}\bigg)^{\frac{1}{2}}\bigg(\int_{\mathbb R^{N}}(|v_{n}|+|\xi_{n}|)^{2}\bigg)^{\frac{1}{2}}\\
&\leq C\varepsilon^{\frac{p+q-2}{2(p+q)}}.
\endaligned
\end{equation}
Furthermore,
$$\int_{\mathbb R^{N}}\theta_{n}^{2}\leq \int_{B_{0}(2R)}\omega_{n}^{2}\rightarrow\int_{B_{0}(2R)}\omega^{2}\leq\int_{\mathbb R^{N}}\omega^{2},  $$
$$\int_{\mathbb R^{N}}\xi_{n}^{2}\leq \int_{B_{0}(2R)}\omega_{n}^{2}\rightarrow\int_{B_{0}(2R)}\nu^{2}\leq\int_{\mathbb R^{N}}\nu^{2}.  $$
Combining these estimate with (\ref{eq315}), (\ref{eq316}), we get
$$\liminf_{n\rightarrow+\infty}\int_{\mathbb R^{N}}\omega_{n}^{2}=\liminf_{n\rightarrow+\infty}\int_{\mathbb R^{N}}u_{n}^{2}\leq \int_{\mathbb R^{N}}\omega^{2}+C\varepsilon^{\frac{p+q-2}{2(p+q)}},$$
$$\liminf_{n\rightarrow+\infty}\int_{\mathbb R^{N}}\nu_{n}^{2}=\liminf_{n\rightarrow+\infty}\int_{\mathbb R^{N}}v_{n}^{2}\leq \int_{\mathbb R^{N}}\nu^{2}+C\varepsilon^{\frac{p+q-2}{2(p+q)}}.\ $$
Since $\varepsilon$ is arbitrary, we obtain that $\omega_{n}\rightarrow \omega, \ \nu_{n}\rightarrow  \nu$ in $ L^{2}(\mathbb R^{N})$, by using interpolation inequality,  $ \omega_{n}\rightarrow \omega\neq0 $ in $L^{\frac{2Np}{N+\alpha}}(\mathbb R^{N})$, $p \in(\frac{N+\alpha}{N},2_*^\alpha)$.
\vskip4pt
We discuss two cases:
\vskip4pt
{\bf Case 1:} $\{x_{n}\}$ is bounded. Assume, passing to a subsequence, that $x_{n}\rightarrow x_{0}$. In this case $u_{n}\rightharpoonup u,v_{n}\rightharpoonup v$ in $H^{1}(\mathbb R^{N})$, $u_{n}\rightarrow u $  in $L^{\frac{2Np}{N+\alpha}}(\mathbb R^{N})$, $ v_{n}\rightarrow v$ in $L^{\frac{2Nq}{N+\alpha}}(\mathbb R^{N})$ for $p,\ q\in(\frac{N+\alpha}{N},2_*^\alpha)$, where $u=\omega(\cdot-x_{0}),v= \nu(\cdot-x_{0}) $.  By using Lemma \ref{le22}, \eqref{009}, Lemma \ref{le2.2} and the weak lower semi-continuity of the norm, we obtain
$$c_0=\lim_{n\rightarrow+\infty}\mathcal{I}(u_{n} ,v_{n})\geq\liminf_{n\rightarrow+\infty} \mathcal{I}(tu_{n},tv_{n})\geq \mathcal{I}(tu,tv),\ \ t>0,$$
therefore, $\max\limits_{t>0}\mathcal{I}(tu,tv)=c_0$, $u_{n}\rightarrow u,v_{n}\rightarrow v$ in $H^{1}(\mathbb R^{N})$. In particular, $(u,v)\in\mathcal{N}_0$ is a minimizer of $\mathcal{I}|_{\mathcal{N}_0}$.
\vskip4pt
{\bf Case 2:} $\{x_{n}\}$ is unbounded. In this case, by $(F_{1})$ and Lebesgue dominated convergence theorem,
$$\aligned \lim_{n\rightarrow+\infty}\int_{\mathbb R^{N}}A(x)u_{n}^{2}&=\lim_{n\rightarrow+\infty}\int_{\mathbb R^{N}}A(x+x_{n})\omega_{n}^{2}\\
&=A_{\infty}\int_{\mathbb R^{N}}\omega^{2}\\
&\geq\int_{\mathbb R^{N}}A(x)\omega^{2}\\
&=\lim_{n\rightarrow+\infty}\int_{\mathbb R^{N}}A(x)\omega_{n}^{2}.\endaligned $$
 Then, by using Lemma \ref{le22}, \eqref{009}, Lemma \ref{le2.2} and the weak lower semi-continuity of the norm, for every $t>0$,
$$\aligned c_0&=\lim_{n\rightarrow+\infty}\mathcal{I}(u_{n},v_{n})\\
&\geq \liminf_{n\rightarrow+\infty} \mathcal{I}(tu_{n},tv_{n})\\
&\geq\liminf_{n\rightarrow+\infty} \mathcal{I}(t\omega_{n},t\nu_{n})\\
&\geq \mathcal{I}(t\omega,t\nu).\endaligned$$
Taking $t^{z}$ so that $h_{\omega v}(t)=\mathcal{I}(t\omega,t\nu)$ reaches its maximum, we conclude that $(\omega_{t^{z}},\nu_{t^{z}})\in\mathcal{N}_0$ and is a minimizer for $\mathcal{I}|_{\mathcal{N}_0}$. By Lemma \ref{le2.3}, the $(u,v)$ is a critical point of $\mathcal{I}$ and hence a solution to \eqref{eq1.1}. We claim $u,\ v\neq0$. We already know that $u\neq0$.  Now we prove $v\neq0$. Indeed, if $v=0$, then the first equation of (\ref{eq1.1}) yields that $u=0$, then $(u,v)=(0,0)$, this is impossible by the {\bf Step 1} in Lemma \ref{le2.3}. In addition, $(|u|,|v|)\in H$ is also a solution. By applying the maximum principle to each equation of \eqref{eq1.1}, we get that $u,v>0$. The proof is complete.

\mysection {The proof of Theorem \ref{th1.2}}

In this section, we consider the case of periodic potential.

\noindent{\bf Proof of Theorem \ref{th1.2}.} The proof is similar to Theorem \ref{th1.1}. Here we only point out the difference.  We claim that $(u,v)\in\mathcal{N}_0.$ Indeed, if $(u,v)\not\in\mathcal{N}_0,$ we discuss it in two cases: $\mathcal{P}(u,v)<0$ and $\mathcal{P}(u,v)>0$.

{\bf Case 1:} $\mathcal{P}(u,v)<0$. By Lemma \ref{le22}, there exists $t\in(0,1)$ such that $(tu,tv)\in \mathcal{N}_0.$ From $(u_{n},v_{n})\in\mathcal{N}_0$ and Fatou's Lemma that
$$\aligned c_0&=\liminf_{n\rightarrow+\infty}\bigg(\mathcal{I}(u_n,v_n)-\frac{1}{p+q}\mathcal{P}(u_n,v_n)\bigg)\\
&=\frac{p+q-2}{2(p+q)}\liminf_{n\rightarrow+\infty}\int_{\mathbb R^{N}}(|\nabla u_n|^2+|\nabla v_n|^2+A(x) |u_n|^2+B(x)|v_n|^2)\\
&\geq\frac{p+q-2}{2(p+q)}\int_{\mathbb R^{N}}(|\nabla u|^{2}+|\nabla v|^{2}+A(x)|u|^2+B(x)|v|^{2})\\
&>\frac{p+q-2}{2(p+q)}t^{2}\int_{\mathbb R^{N}}(|\nabla u|^{2}+|\nabla v|^{2}+A(x)|u|^2+B(x)|v|^{2})\\
&=\mathcal{I}(tu,tv)-\frac{1}{p+q}\mathcal{P}(tu,tv)\\
&\geq c_0,\endaligned$$
which is a contradiction.
\vskip4pt
{\bf Case 2:} $\mathcal{P}(u,v)>0$. We define $\xi_{n}:=u_{n}-u,\gamma_{n}:=v_{n}-v$. By using Br\'{e}zis-Lieb lemma \cite[Lemma 1.32]{wi} and Lemma \ref{le2.2}, we may obtain
\begin{equation}\label{eqI}
\mathcal{I}(u_{n},v_{n})=\mathcal{I}(u,v)+\mathcal{I}(\xi_{n},\gamma_{n})+o_{n}(1),
\end{equation}
and
\begin{equation}\label{eqI1}
\mathcal{P}(u_{n},v_{n})=\mathcal{P}(u,v)+\mathcal{P}(\xi_{n},\gamma_{n})+o_{n}(1).
\end{equation}
Threfore $\limsup\limits_{n\rightarrow\infty}\mathcal{P}(\xi_{n},\gamma_{n})<0$. According to Lemma \ref{le22}, there is a  $t_{n}\in(0,1)$ such that $(t_{n}\xi_{n},t_{n}\gamma_{n})\in \mathcal{N}_0.$ Furthermore, one has that $\limsup\limits_{n\rightarrow\infty}t_{n}<1$, otherwise, along a subsequence, $t_{n}\rightarrow1$ and hence $\mathcal{P}(\xi_{n},\gamma_{n})=\mathcal{P}(t_{n}\xi_{n}, t_{n}\gamma_{n})+o_{n}(1)=o_{n}(1),$
which is a contradiction. For $n$ large enough, it follows from $(u_{n},v_{n})\in \mathcal{N}_0$, (\ref{eqI}) and (\ref{eqI1}) that
$$\aligned  &\ c_0+o_{n}(1)\\
=&\ \mathcal{I}(u_{n},v_{n})-\frac{1}{p+q}\mathcal{P}(u_{n},v_{n})\\
=&\ \frac{p+q-2}{2(p+q)}\int_{\mathbb R^{N}}(|\nabla u_n|^{2}+|\nabla v_n|^{2} +A(x)|u_n|^{2}+B(x)|v_n|^{2} )\\
=&\ \frac{p+q-2}{2(p+q)}\int_{\mathbb R^{N}}(|\nabla u|^{2}+|\nabla v|^{2}+|\nabla \xi_{n}|^{2}+|\nabla \gamma_{n}|^{2}\\
&\ +A(x)|v|^{2}+A(x)|\xi_{n}|^{2}+B(x)|v|^{2}+B(x)|\gamma_{n}|^{2})\\
>&\ \frac{p+q-2}{2(p+q)}\int_{\mathbb R^{N}}(|\nabla u|^{2}+|\nabla v|^{2}+A(x)|u |^{2}+B(x)|v|^{2})\\
&\  +\frac{p+q-2}{2(p+q)}t_n^2\int_{\mathbb R^{N}}(|\nabla\xi_{n}|^{2}+|\nabla\gamma_{n}|^{2} +A(x)|\xi_n|^{2} +B(x)|\gamma_{n}|^{2})\\
=&\ \mathcal{I}(u ,v)-\frac{1}{p+q}\mathcal{P}(u,v)+\mathcal{I}(t_{n}\xi_{n},t_{n}\gamma_{n})-\frac{1}{p+q}\mathcal{P}(t_n\xi_{n},t_n\gamma_{n})\\
=&\ \mathcal{I}(t_n\xi_{n},t_n\gamma_{n})+\frac{p+q-2}{2(p+q)}\int_{\mathbb R^{N}}(|\nabla u|^{2}+|\nabla v|^{2}\\
&\ +A(x)|u|^{2}+B(x)|v|^{2})\\
\geq&\ c_0
,\endaligned$$
which is also a contradiction.

Therefore, $(u,v)\in\mathcal{N}_0$ and then $(u,v)$ is a minimizer of $\mathcal{I}|_{\mathcal{N}_0}$ and the rest proof is similar to Theorem \ref{th1.1}.
\vskip10pt
\noindent{\bf Acknowledgments}
\vskip16pt
The authors would like to gratefully acknowledge support by National Natural Science Foundation of China 
(No. 11871152)  and Key Project of Natural Science Foundation of Fujian (No. 2020J02035).
We thank wish to thank the anonymous 
referee vrery much for the careful reading and valuable comments.
\vskip4pt

\end{document}